\documentclass[12pt]{amsart}
\usepackage{amssymb,latexsym,eufrak,amsmath,amscd, graphicx}
\usepackage[dvipsnames,usenames]{color}
\usepackage[colorlinks=true,pagebackref,hyperindex]{hyperref}   
\hypersetup{ linkcolor=RawSienna, anchorcolor=BurntOrange,
citecolor=OliveGreen, filecolor=BlueViolet, menucolor=Yellow,
urlcolor=OliveGreen }

\usepackage{amsfonts}
\usepackage[all]{xypic}
\UseTips

\setlength{\parindent}{.4 in} \setlength{\textwidth}{6.3 in}
\setlength{\topmargin} {0 in} \setlength{\evensidemargin}{0 in}
\setlength{\oddsidemargin}{0 in} \setlength{\footskip}{.3 in}
\setlength{\headheight}{.3 in} \setlength{\textheight}{8.8 in}
\setlength{\parskip}{.1 in}

\renewcommand{\to}[1][]{\xrightarrow{\ #1\ }}


\newcommand{\forget}[1]{}  

\makeatletter
\renewcommand{\theenumi}{\@roman\c@enumi}
\makeatother

\renewcommand{\phi}{\varphi}
\renewcommand{\epsilon}{\varepsilon}
\renewcommand{\theta}{\vartheta}

\newcommand{\llbracket}{[\negthinspace[}
\newcommand{\rrbracket}{]\negthinspace]}

\def\QQ{{\mathbf Q}}

\def\cO{\mathcal{O}}

\def\fra{\mathfrak{a}}
\def\frb{\mathfrak{b}}

\def\frp{\mathfrak{p}}
\def\frn{\mathfrak{n}}


\def\o{\circ}

\def\.{\cdot}
\def\({\Big{(}}
\def\){\Big{)}}
\def\^{\widehat}
\def\~{\widetilde}
\def\*{{}^*\!}
\def\[{\llbracket}
\def\]{\rrbracket}

\renewcommand{\and}{ \quad \text{and} \quad }


 \DeclareMathOperator{\lct}{lct}
 
 \DeclareMathOperator{\ord}{ord}



\newtheorem{lemma}{Lemma}[section]
\newtheorem{theorem}[lemma]{Theorem}
\newtheorem{corollary}[lemma]{Corollary}
\newtheorem{proposition}[lemma]{Proposition}

\theoremstyle{definition}

\newtheorem{remark}[lemma]{Remark}

\theoremstyle{remark}
\newtheorem*{remark*}{Remark}
\newtheorem*{note*}{Note}


\frenchspacing

\begin{document}

\title{The Ascending Chain Condition for log canonical thresholds on
l.c.i. varieties}

\author[T.~de Fernex]{Tommaso de Fernex}
\address{Department of Mathematics, University of Utah, 155 South 1400 East,
Salt Lake City, UT 48112-0090, USA} \email{{\tt
defernex@math.utah.edu}}

\author[M. Musta\c{t}\u{a}]{Mircea~Musta\c{t}\u{a}}
\address{Department of Mathematics, University of Michigan,
530 Church Street,
Ann Arbor, MI 48109, USA} \email{{\tt mmustata@umich.edu}}

\begin{abstract}
Shokurov's ACC Conjecture \cite{Sho} says that the set of all log
canonical thresholds
on varieties of bounded dimension satisfies the Ascending Chain Condition.
This conjecture was proved for log canonical
thresholds on smooth varieties in \cite{EM1}. Here we use this result
and inversion of adjunction to establish the conjecture for locally
complete intersection varieties.
\end{abstract}

\subjclass[2000]{Primary 14E15; Secondary 14B05, 14E30}
\thanks{The first author was partially supported by NSF
grant DMS-0548325.
The second author was partially supported by
  NSF grant DMS-0758454, and
  by a Packard Fellowship}
\keywords{Log canonical threshold, Inversion of Adjunction,
locally complete intersection}

\maketitle

\markboth{T.~DE FERNEX AND M.~MUSTA\c T\u A}{LOG CANONICAL THRESHOLDS
ON L.C.I. VARIETIES}

\section{Introduction}

Let $k$ be an algebraically closed field of characteristic zero.
Log canonical varieties
are varieties with mild singularities that provide the most general context
for the Minimal Model Program. More generally, one puts the log canonicity condition
on pairs $(X,\frb^q)$, where $\frb$ is a nonzero ideal on $X$ (most of the times, it is the ideal
of an effective Cartier divisor), and $q$ is a nonnegative real number.
Given a log canonical variety $X$ over $k$, and an ideal sheaf $\frb$ on $X$ with 
$(0)\neq\frb\neq\cO_X$, one defines
the log canonical threshold $\lct(X,\frb)$ of the pair $(X,\frb)$.
This is the largest $q$ such that the pair $(X,\frb^q)$ is log canonical. The log canonical threshold
is a fundamental invariant in birational geometry, see for example \cite{Kol2},
\cite{EM2}, or Chap. 9 in \cite{positivity}.

Shokurov's ACC Conjecture \cite{Sho} says that the set of all log canonical thresholds
on varieties of bounded dimension satisfies the Ascending Chain Condition, that is, it contains no
infinite strictly increasing sequences. This conjecture attracted considerable interest, due to its implications for the Termination of Flips Conjecture (see \cite{Birkar} for a result in this direction).
The first unconditional results on sequences of log canonical thresholds on smooth varieties of arbitrary
dimension have been  obtained in \cite{dFM}, and they were subsequently reproved and strengthened in \cite{Kol1}.
The ACC Conjecture was proved in \cite{EM1} for smooth
ambient varieties, by reducing it to a key special case covered in \cite{Kol1}. In this note we extend this result
to the case when the ambient varieties are locally complete intersection (l.c.i., for short).

\begin{theorem}\label{thm1}
For every $n\geq 1$, the set
$$\{\lct(X,\frb)\mid X\,\text{is l.c.i. and log canonical},\,(0)\neq\frb\neq\cO_X,\,\dim(X)\leq n \}$$
satisfies the Ascending Chain Condition.
\end{theorem}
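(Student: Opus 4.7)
The plan is to reduce the claim to the ACC on smooth ambient varieties proved in~\cite{EM1}, using inversion of adjunction as the bridge. First, I would localize: since the lct is the minimum of the local lcts $\lct_p(X,\frb)$ over closed points $p\in X$, an infinite strictly increasing sequence $\lct(X_i,\frb_i)$ gives rise, after choosing points $p_i\in X_i$ where the minimum is attained, to an increasing sequence $\lambda_i:=\lct_{p_i}(X_i,\frb_i)$ of local thresholds. I may then assume each $X_i$ is the germ of an l.c.i.\ log canonical singularity of dimension at most $n$ at $p_i$. Because $X_i$ is l.c.i., there is a local embedding $X_i\subset V_i$ into a smooth variety $V_i$, with $X_i$ cut out by a regular sequence generating an ideal $\frq_i\subset\cO_{V_i}$ of height $c_i=\dim V_i-\dim X_i$.

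The key step is inversion of adjunction for l.c.i.\ subvarieties, in the form
\[
\lambda_i \;=\;\sup\bigl\{t\geq 0 \,:\, (V_i,\,\frq_i\cdot\widetilde{\frb}_i^{\,t})\text{ is log canonical at }p_i\bigr\},
\]
where $\widetilde{\frb}_i\subset\cO_{V_i}$ is any lift of $\frb_i$. This expresses $\lambda_i$ as a \emph{mixed} log canonical threshold on the smooth ambient $V_i$. I would then aim to apply the smooth-case ACC from~\cite{EM1} to rule out strictly increasing sequences of such mixed thresholds in bounded ambient dimension.

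Two issues must be resolved, and I expect the second to be the main obstacle. The first is translating the mixed sup above into a genuine lct: one may package $(V_i,\frq_i\cdot\widetilde{\frb}_i^{\,t})$ as an ordinary lct by passing to an auxiliary product with $\AAA^1$, or invoke a multi-ideal variant of the smooth ACC, so that a strictly increasing sequence of such mixed thresholds is still prohibited. The second, more serious, is controlling $\dim V_i=\dim X_i+c_i$: the embedding codimension $c_i$ of an l.c.i.\ log canonical singularity of bounded dimension is not a priori uniformly bounded, yet~\cite{EM1} is formulated in bounded ambient dimension. The proof must therefore either bound $c_i$ along the putative ascending chain---perhaps exploiting that $\lambda_i\leq\dim X_i\leq n$, or invoking structural properties of log canonical l.c.i.\ singularities---or find a form of inversion of adjunction that is insensitive to the chosen embedding.
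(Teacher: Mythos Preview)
Your strategy is exactly the paper's: localize, apply inversion of adjunction to rewrite $\lct_{p_i}(X_i,\frb_i)$ as a mixed threshold $\lct_{\frq_i^{c_i}}(V_i,\widetilde{\frb}_i)$ on a smooth ambient, then invoke ACC in bounded ambient dimension. You have also correctly isolated the two genuine obstacles. What is missing is their resolution, and the paper's resolutions are worth knowing.

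For the embedding-dimension bound, the paper gives a short and decisive argument (its Proposition~\ref{bound}): if $X\subset A$ is l.c.i.\ of codimension $e$ and $X$ is log canonical, then inversion of adjunction forces $(A,\frq^e)$ to be log canonical near $X$. Testing this against the exceptional divisor $E$ of the blow-up of $A$ at $p$ gives
\[
N \;=\; 1+\ord_E(K_{A'/A}) \;\geq\; e\cdot\ord_E(\frq) \;\geq\; 2e \;=\; 2\bigl(N-\dim X\bigr),
\]
so $N=\dim_k T_pX\leq 2\dim X$. Thus $\dim V_i\leq 2n$ uniformly, and the reduction lands in a fixed ambient dimension. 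Your speculation that one might bound $c_i$ via $\lambda_i\leq n$ or ``structural properties'' is in the right spirit, but this concrete inequality is the point you are missing.

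For the first issue, your suggested $\AAA^1$-product repackaging does not obviously produce a single ideal whose ordinary lct recovers the mixed threshold, and the paper does not attempt anything of the sort. Instead it proves ACC for the set of \emph{mixed} thresholds $\lct_{\fra}(W_N,\frb)$ on $W_N=\Spec k\llbracket x_1,\dots,x_N\rrbracket$ directly: given a strictly increasing sequence $c_m=\lct_{\fra_m}(W_N,\frb_m)$ with limit $c$, one passes to limit ideals $\widetilde{\fra},\widetilde{\frb}$ (via the ultrafilter construction of \cite{dFM}), shows $\widetilde{c}:=\lct_{\widetilde{\fra}}(\widetilde{\frb})\geq c$ by approximating $c$ with rationals $p/q$ and using $\lct(\fra_m^q\frb_m^p)\geq 1/q$, and then, writing $\widetilde{c}=r/s$, applies the smooth ACC of \cite{EM1} to the sequence $\lct(\fra_m^s\frb_m^r)$ to find infinitely many $m$ with $c_m\geq r/s=\widetilde{c}$, a contradiction. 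So the ``multi-ideal variant'' you allude to is proved, not assumed, and the reduction to \cite{EM1} happens only after clearing denominators inside this argument.
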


In fact, we deal with a more general version of log canonical thresholds.
Given a variety $X$ and an ideal sheaf
$\fra$ on $X$ such that the pair $(X,\fra)$ is log canonical, for every ideal
sheaf $\frb$ on $X$  with $(0)\neq\frb\neq\cO_X$, we define the \emph{mixed log canonical
threshold} $\lct_{\fra}(X, \frb)$ to be the largest $q$ such that the pair
$(X,\fra\cdot\frb^q)$ is log canonical. We will prove the following strengthening
of the above theorem.

\begin{theorem}\label{thm2}
For every $n\geq 1$, the set
$$\{\lct_{\fra}(X,\frb)\mid X\,\text{is l.c.i.},\,(X,\fra)\,\text{is log canonical},\,(0)\neq\frb\neq\cO_X,\,
 \dim(X)\leq n\}$$
satisfies the Ascending Chain Condition.
\end{theorem}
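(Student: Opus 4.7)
The plan is to reduce Theorem~\ref{thm2} to the ACC for mixed log canonical thresholds on smooth varieties established in \cite{EM1}, the reduction being carried out by inversion of adjunction. Suppose toward a contradiction that $t_k=\lct_{\fra_k}(X_k,\frb_k)$ is a strictly increasing sequence with $X_k$ l.c.i., $\dim X_k\leq n$, and $(X_k,\fra_k)$ log canonical. Since log canonicity is a local condition and the threshold is the infimum of local thresholds, for each $k$ we pick a point $x_k\in X_k$ at which the pair $(X_k,\fra_k\cdot\frb_k^{t_k})$ fails to be log canonical and pass to the formal (or analytic) germ of $X_k$ at $x_k$.

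The l.c.i.~hypothesis then lets us embed each germ $(X_k,x_k)$ as a complete intersection inside a smooth germ $(Y_k,x_k)$ of dimension $n+c_k$, cut out by a regular sequence $f_1^{(k)},\dots,f_{c_k}^{(k)}$; let $I_k\subset\cO_{Y_k}$ denote the ideal of $X_k$ in $Y_k$. Lifting $\fra_k$ and $\frb_k$ to ideals $\widetilde\fra_k,\widetilde\frb_k$ on $Y_k$, the inversion of adjunction for l.c.i.~subvarieties of smooth varieties (due to Ein--Musta\c{t}\u{a}) yields an identity of the form
$$
\lct_{\fra_k}(X_k,\frb_k)\;=\;\lct_{\widetilde\fra_k\cdot I_k^{c_k}}(Y_k,\widetilde\frb_k),
$$
turning $(t_k)$ into a strictly increasing sequence of mixed log canonical thresholds computed on smooth ambient germs of dimension $n+c_k$.

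If the embedding codimensions $c_k$ are uniformly bounded, say by $c$, a contradiction follows immediately from the ACC on smooth varieties of dimension at most $n+c$ in \cite{EM1}. The main obstacle is therefore to control the codimensions $c_k$, equivalently to bound the embedding dimension of each l.c.i.~log canonical germ $(X_k,x_k)$ by a function of $n$ alone. I expect such a bound to follow from the log canonicity of $(X_k,\fra_k)$ combined with the l.c.i.~hypothesis, which together restrict the local singularities of $X_k$ (for instance through a uniform bound on the multiplicity or a Tjurina-type invariant at $x_k$). An alternative is to cut $Y_k$ down by generic smooth divisors containing $X_k$, reducing to a fixed ambient dimension while tracking the effect on the mixed LCT via the product formula above. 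In either case, once we are on smooth ambients of bounded dimension, the smooth ACC from \cite{EM1} concludes the argument.
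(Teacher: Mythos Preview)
Your overall strategy---localize, then use inversion of adjunction to pass to a smooth ambient---is exactly the paper's. But there are two genuine gaps.

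\textbf{The embedding dimension bound.} You correctly identify this as the main obstacle and then leave it open. The paper proves it as Proposition~\ref{bound}: if $X$ is l.c.i.\ and log canonical, then $\dim_k T_xX\leq 2\dim(X)$ for every closed point $x$. The argument is short: embed $X$ in a smooth $A$ of dimension $N=\dim_k T_xX$ with codimension $e$, so that inversion of adjunction makes $(A,\frp^e)$ log canonical; testing against the exceptional divisor of the blow-up of $A$ at $x$ gives $N\geq e\cdot\ord_E(\frp)\geq 2e=2(N-\dim X)$, hence $N\leq 2\dim X$. Your proposed alternative of cutting $Y_k$ by generic smooth divisors through $X_k$ cannot work: a smooth germ containing $(X_k,x_k)$ has dimension at least the embedding dimension of $X_k$ at $x_k$, which is precisely the quantity you are trying to bound.

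\textbf{The smooth case is not in \cite{EM1}.} You invoke ``the ACC for mixed log canonical thresholds on smooth varieties established in \cite{EM1},'' but that paper only proves ACC for \emph{ordinary} log canonical thresholds (of principal ideals, in fact). The passage from ordinary to mixed thresholds on a smooth germ is the content of Section~3 here and requires real work: one takes a strictly increasing sequence $c_m=\lct_{\fra_m}(W_N,\frb_m)$, forms the limit ideals $\widetilde{\fra},\widetilde{\frb}$ via the ultrafilter construction of \cite{dFM}, shows $\widetilde{c}:=\lct_{\widetilde{\fra}}(\widetilde{W}_N,\widetilde{\frb})\geq c:=\lim c_m$ by approximating with rational exponents, and then uses the ordinary ACC from \cite{EM1} (packaged as Proposition~\ref{prop_ACC}(ii)) to find infinitely many $m$ with $\lct(\fra_m^s\frb_m^r)\geq 1/s$ where $\widetilde{c}=r/s$, forcing $c_m\geq\widetilde{c}\geq c>c_m$. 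Without this step your reduction lands on a statement that is not available in the literature.
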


A key point for the proof of Theorem~\ref{thm2} is that by Inversion of Adjunction we can express
every number of the form $\lct_{\fra}(X,\frb)$, with $X$ locally complete intersection,
as a similar invariant
on a smooth variety (this is why it is important to work with mixed log canonical thresholds).
Furthermore, we show that if $X$ is an l.c.i. log canonical variety, then $\dim_kT_xX\leq
2\dim(X)$ for every $x\in X$. As a consequence, the above reduction to the smooth case keeps the dimension
of the ambient variety bounded.
After localizing at a suitable point, we reduce ourselves to the case when the ambient space is
${\rm Spec}\,k\llbracket x_1,\ldots,x_{2n}\rrbracket$. In this case we use the framework from
 \cite{dFM} (or equivalently, from \cite{Kol1}) to reduce
 the ACC statement to the case of usual log canonical thresholds on smooth varieties, that was treated in \cite{EM1}.

\subsection{Acnowledgment} We are grateful to Lawrence Ein for several useful discussions.

\section{Mixed log canonical thresholds}

In this section we discuss mixed log canonical thresholds, and show how to reduce
Theorem~\ref{thm2} to the case of ambient smooth varieties. We start by fixing the setup.

Let $k$ be an algebraically closed field of characteristic zero.
In what follows $X$ will be either a variety over $k$ or
${\rm Spec}\, k\llbracket x_1,\ldots,x_n\rrbracket$.
For the basic facts about log canonical pairs in the setting of algebraic varieties, see
\cite{Kol2} or Chap. 9 in \cite{positivity}, while for the case of the spectrum of a formal power series ring
we refer to \cite{dFM}. The key point is that by
\cite{Temkin}, log resolutions exist also in the latter case, and therefore the usual theory
of log canonical pairs carries through.

Suppose that $X$ is as above, and $\fra$, $\frb$  are coherent nonzero sheaves of ideals on $X$ such that
the pair $(X,\fra)$ is log canonical.
In particular, this implies that $X$ is normal
and $\QQ$-Gorenstein. We also assume that $\frb\neq\cO_X$. The \emph{mixed log canonical threshold} of $\frb$ with respect to
$(X,\fra)$ is
$$\lct_{\fra}(X,\frb):={\rm sup}\{q\geq 0\mid (X,\fra\.\frb^q)\, \text{is log canonical}\}.$$

The fact that log canonicity can be checked on a log resolution allows us to describe
this invariant in terms of such a resolution.
Suppose that $\pi\colon Y\to X$ is a log resolution of $\fra\cdot\frb$, and write
$\fra\cdot\cO_Y=\cO(-\sum_ia_iE_i)$, $\frb\cdot\cO_Y=\cO(-\sum_ib_iE_i)$, and
$K_{Y/X}=\sum_ik_iE_i$. It follows from the characterization of log canonicity in terms of
a log resolution that
\begin{equation}\label{eq0}
\lct_{\fra}(X,\frb)=\min\left\{\frac{k_i+1-a_i}{b_i}\mid b_i>0\right\}.
\end{equation}
We see from the above formula that the mixed log canonical threshold is a rational number.
Note also that it is zero if and only if there is $i$ such that $k_i+1=a_i$ and $b_i>0$
(in other words, $(X,\fra)$ is not klt, and there is a non-klt center contained in the support of $\frb$).

\begin{remark}
When $\fra = \cO_X$, the mixed log canonical threshold $\lct_{\cO_X}(X,\frb)$
is nothing else than the usual log canonical threshold $\lct(X,\frb)$,
which we sometimes simply denote by $\lct(\frb)$, when there is no ambiguity regarding
the ambient scheme.
\end{remark}

\begin{remark}\label{rem0}
It follows from the description in terms of a log resolution that if $X=U_1\cup\ldots\cup U_r$, with
$U_i$ open, then
$$\lct_{\fra}(X,\frb)=\min_i\lct_{\fra_i}(U_i,\frb_i),$$
where $\fra_i=\fra\vert_{U_i}$ and $\frb_i=\frb\vert_{U_i}$.
\end{remark}

\begin{remark}\label{rem1}
If $\fra$ and $\frb$ are as above, and $c=\lct_{\fra}(X,\frb)$, then $\lct(X,\fra\cdot \frb^{c})=1$. Indeed, by assumption the pair
$(X,\fra\cdot\frb^c)$ is log canonical, and for every $\alpha>1$ the pair
$(X,(\fra\cdot\frb^c)^{\alpha})$ is not log canonical since
$(X,\fra\cdot\frb^{c\alpha})$ is not.
\end{remark}

\begin{remark}\label{rem2}
Suppose that $X$ is a smooth variety over $k$, and $\fra$, $\frb$ are nonzero ideals on $X$,
with $(X,\fra)$ log canonical and $\frb\neq\cO_X$. If $\lct_{\fra}(X,\frb)=c$, then there is
a (closed) point $x\in X$ such that $\lct_{\fra'}(X',\frb')=c$, where $X'={\rm Spec}(\widehat{\cO_{X,x}})$,
and $\fra'=\fra\cdot\cO_{X'}$, $\frb'=\frb\cdot\cO_{X'}$.
 Indeed, if $\pi\colon Y\to X$ is a log resolution
of $(X,\fra\cdot\frb)$, and if $E_i$ is one of the divisors on $Y$ for which the minimum in
(\ref{eq0}) is achieved, then it is enough to take $x\in\pi(E_i)$ (note that $Y\times_XX'$ gives a log resolution of $(X',\fra'\cdot\frb')$). In this way we are reduced to considering ideals in
$k\llbracket x_1,\ldots,x_n\rrbracket$, where $n=\dim(X)$.
\end{remark}

The following application of
Inversion of Adjunction is a key tool in our study, as it allows us to replace mixed log canonical thresholds on locally complete intersection varieties with the similar type of  invariants on ambient smooth varieties. In this result we assume that our schemes are of finite type over $k$.

\begin{proposition}\label{inversion}
Let $A$ be a smooth irreducible variety over $k$, and $X\subset A$ a closed subvariety of pure codimension $e$, that is normal and locally a complete intersection.
Suppose that $\fra$ and $\frb$ are ideals on $A$, with $\frb\neq\cO_A$, and such that
$X$ is not contained in the union of the zero-loci of $\fra$ and $\frb$.
\begin{enumerate}
\item[i)] The pair $(X,\fra\vert_X)$ is log canonical if and only if
for some open neighborhood $U$ of $X$,  the pair
 $(U,\fra\cdot \frp^e\vert_U)$ is log canonical, where
$\frp$ is the ideal defining $X$ in $A$.
\item[ii)] If $(X,\fra\vert_X)$ is log canonical, and if $X$ intersects the zero-locus  of $\frb$, then for some open neighborhood
$V$ of $X$ we have
$$\lct_{\fra\vert_X}(X,\frb\vert_X)=\lct_{\fra\vert_V\cdot \frp^e\vert_V} (V, \frb\vert_V).$$
\end{enumerate}
\end{proposition}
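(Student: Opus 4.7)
The plan is to derive both parts from the Inversion of Adjunction theorem for locally complete intersection subvarieties of a smooth ambient space. In the form I will use it, this statement reads: for $X \subset A$ normal and locally a complete intersection of pure codimension $e$ in a smooth variety $A$ with ideal sheaf $\frp$, and for any ideal $\fra$ on $A$ whose zero-locus does not contain $X$, the pair $(X, \fra|_X)$ is log canonical if and only if $(A, \fra \cdot \frp^e)$ is log canonical in some open neighborhood of $X$. Crucially, the same equivalence remains valid when $\fra$ is replaced by any $\RR$-linear combination of ideals on $A$: log canonicity is defined via the discrepancy inequality on a log resolution, and the inversion-of-adjunction argument carries through verbatim with $\RR$-coefficients.

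Part (i) is then precisely the special case of this theorem applied to $\fra$ itself; the hypothesis that $X$ avoids the zero-locus of $\fra$ ensures that $\fra|_X$ is a nonzero ideal on $X$, so both sides of the equivalence are well-defined.

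For part (ii), I would apply the $\RR$-ideal version of (i) to the product $\fra \cdot \frb^q$, which by the hypotheses on $\fra$ and $\frb$ continues to satisfy the non-vanishing condition on $X$ for every $q \geq 0$. Setting $c = \lct_{\fra|_X}(X, \frb|_X)$, the pair $(X, \fra|_X \cdot \frb|_X^c)$ is log canonical, so (i) produces an open neighborhood $V_1$ of $X$ on which $(V_1, \fra \cdot \frp^e \cdot \frb^c|_{V_1})$ is log canonical. Similarly, (i) applied to $\fra$ alone yields a neighborhood $V_0$ on which $(V_0, \fra \cdot \frp^e|_{V_0})$ is log canonical, so that the mixed log canonical threshold on the right-hand side is well-defined on $V := V_0 \cap V_1$, and the above gives $\lct_{\fra \cdot \frp^e}(V, \frb) \geq c$. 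Conversely, if $c' = \lct_{\fra \cdot \frp^e}(V, \frb)$, then $(V, \fra \cdot \frp^e \cdot \frb^{c'}|_V)$ is log canonical, and the other direction of (i), applied to $\fra \cdot \frb^{c'}$, yields log canonicity of $(X, \fra|_X \cdot \frb|_X^{c'})$, hence $c' \leq c$. Combining both inequalities gives the equality asserted in (ii), with $V$ the desired neighborhood.

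The main obstacle is the invocation of l.c.i.\ Inversion of Adjunction in the precise form stated above; this is the substantive input, and to verify it from scratch would require the discrepancy comparison between a log resolution of $A$ and the induced log resolution of $X$ obtained by restricting to the strict transform, where the hypothesis that $X$ is locally a complete intersection enters essentially. Once this deep input is granted, both parts of the proposition reduce to a short formal manipulation of the definitions, together with the elementary observation that log canonicity is an open condition on $A$, so that finitely many open neighborhoods of $X$ can be intersected to produce a single $V$ on which all the relevant pairs are simultaneously log canonical.
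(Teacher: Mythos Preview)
Your proposal is correct and follows essentially the same route as the paper: both derive (i) and (ii) directly from Inversion of Adjunction for l.c.i.\ subvarieties (Corollary~3.2 in \cite{EM3}), applied to the ideal $\fra\cdot\frb^q$ for variable $q\geq 0$. Your write-up simply unpacks the two inequalities in (ii) and the bookkeeping of neighborhoods more explicitly than the paper's one-line citation.
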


\begin{proof}
Both assertions follow from Inversion of Adjunction (see Corollary~3.2 in \cite{EM3}), as
this says that
for every nonnegative $q$, the pair $(X,(\fra\.\frb^q)\vert_X)$ is log canonical
if and only if the pair $(A,\fra\.\frb^q\.\frp^e)$ is log canonical in some neighborhood of
$X$.
\end{proof}

The next proposition allows us to control the dimension of the smooth variety,
when replacing a mixed log canonical threshold on an l.c.i. variety by one on a smooth variety.
We keep the assumption that $X$ is of finite type over $k$. Given a closed point $x\in X$,
we denote by $T_xX$ the Zariski tangent space of $X$ at $x$.

\begin{proposition}\label{bound}
Let $X$ be a locally complete intersection variety.
If $X$ is log canonical, then $\dim_kT_xX\leq 2\dim(X)$ for every $x\in X$.
\end{proposition}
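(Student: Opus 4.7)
The plan is to apply Inversion of Adjunction (Proposition~\ref{inversion}(i)) to convert the log canonicity of $X$ into the log canonicity of a power of the ideal of $X$ in a suitably chosen smooth ambient, and then to use the classical upper bound for the log canonical threshold in terms of the order of vanishing at a point.

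Fix a closed point $x\in X$, set $n=\dim X$ and $N=\dim_kT_xX$, and aim to show $N\le 2n$. The first step is to exhibit a Zariski neighborhood of $x$ in $X$ as a closed l.c.i.\ subvariety of a smooth variety $A$ of dimension \emph{exactly} $N$, with defining ideal $\frp\subseteq\frm_{A,x}^2$. Starting from any closed embedding of such a neighborhood into a smooth $\~A$, I would pick $\dim\~A-N$ elements of the defining ideal of $X$ whose differentials at $x$ are linearly independent---these exist because, by definition of $N$, the image of that ideal in $\frm_{\~A,x}/\frm_{\~A,x}^2$ has dimension $\dim\~A-N$---and take $A\subset\~A$ to be their common vanishing locus. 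After shrinking, $A$ is smooth of dimension $N$ and still contains $X$, and the ideal $\frp$ of $X$ in $A$ lies in $\frm_{A,x}^2$ by minimality of the embedding. The codimension of $X$ in $A$ is then $e:=N-n$.

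Next I would invoke Proposition~\ref{inversion}(i) with $\fra=\cO_A$: since $X$ is log canonical, $(A,\frp^e)$ is log canonical in a neighborhood of $x$, so in particular $\lct(A,\frp^e)\ge 1$. At the same time, $\frp\subseteq\frm_{A,x}^2$ forces $\ord_x(\frp^e)\ge 2e$, and the exceptional divisor of the blowup of $A$ at $x$ has discrepancy $N-1$, which yields the standard bound
$$\lct(A,\frp^e)\le\frac{N}{\ord_x(\frp^e)}\le\frac{N}{2e}=\frac{N}{2(N-n)}.$$
Combining this with $\lct(A,\frp^e)\ge 1$ rearranges to $N\le 2n$, as required.

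The step that will require the most care is constructing the minimal ambient $A$ on an honest Zariski neighborhood of $x$ (so that Proposition~\ref{inversion}(i) applies as stated, rather than only after completion); this is handled by the standard fact that an element of $\frm_{\~A,x}$ with nonzero differential at $x$ cuts out a smooth hypersurface in some Zariski open around $x$, applied iteratively to the chosen generators.
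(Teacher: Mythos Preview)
Your argument is correct and follows essentially the same route as the paper: embed $X$ minimally into a smooth $A$ of dimension $N=\dim_kT_xX$, apply Inversion of Adjunction to get $(A,\frp^e)$ log canonical, and test this against the exceptional divisor of the blowup at $x$, using $\ord_x(\frp)\ge 2$. You are in fact more explicit than the paper about why $\frp\subseteq\frm_{A,x}^2$ (the paper just asserts $\ord_E(\frp)\ge 2$ without comment); the only cosmetic omission is that you should dispose of the trivial case $e=0$ before dividing by it.
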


\begin{proof}
Fix $x\in X$, and let $N=\dim\,T_xX$. After possibly replacing $X$ by an open neighborhood of $x$,
we may assume that we have a closed embedding of $X$ in a smooth irreducible variety $A$,
of pure codimension $e$, with
$\dim(A)=N$. If $X=A$, then $N=\dim(X)$ and we are done.

Suppose now that $e\geq 1$.
Since $X$ is locally a complete intersection, it follows from Inversion of Adjunction
(see Corollary~3.2 in \cite{EM3}) that the pair $(A,\frp^e)$ is log canonical, where $\frp$ is the ideal
of $X$ in $A$. In particular, if $E$ is the exceptional divisor of
the blow-up $A'$ of $A$ at $p$, and ${\rm ord}_E$ is the corresponding valuation, then
we have
$$N=1+\ord_E(K_{A'/A})\geq e\cdot \ord_E(\frp)\geq 2e=2(N-\dim(X)).$$
This gives $N\leq 2\dim(X)$.
\end{proof}

\begin{corollary}\label{reduction}
In order to prove Theorem~\ref{thm2}, it is enough to show that for every $N\geq 1$, the set
$$\{\lct_{\fra}(W_N,\frb)\mid (W_N,\fra)\,\text{is log canonical},\,(0)\neq\frb\neq\cO_{W_N}\},$$
where $W_N={\rm Spec}\,k\llbracket x_1,\ldots,x_N\rrbracket$,
satisfies the Ascending Chain Condition.
\end{corollary}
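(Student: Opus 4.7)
The plan is, given any infinite strictly increasing sequence $c_m=\lct_{\fra_m}(X_m,\frb_m)$ with each $X_m$ l.c.i.\ and log canonical and $\dim X_m\leq n$, to extract from it an infinite strictly increasing sequence of mixed log canonical thresholds on a fixed $W_N$ with $N\leq 2n$, contradicting the hypothesis of the corollary.

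The reduction proceeds in three steps. First, for each $m$, choose a log resolution $\pi_m\colon Y_m\to X_m$ of $\fra_m\cdot\frb_m$, pick a divisor $E$ on $Y_m$ at which the minimum in~(\ref{eq0}) is attained, and pick a closed point $x_m\in\pi_m(E)$. By Remark~\ref{rem0}, replacing $X_m$ by an affine neighborhood of $x_m$ preserves $c_m$. Proposition~\ref{bound} then gives $N_m:=\dim_k T_{x_m}X_m\leq 2n$, so after further shrinking we may embed $X_m$ as a closed subscheme of codimension $e_m=N_m-\dim X_m$ in a smooth irreducible variety $A_m$ with $\dim A_m=N_m$. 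Let $\frp_m\subset\cO_{A_m}$ denote the ideal of $X_m$, and let $\widetilde{\fra}_m,\widetilde{\frb}_m\subset\cO_{A_m}$ be the preimages of $\fra_m,\frb_m$ under $\cO_{A_m}\surj\cO_{X_m}$. The hypotheses of Proposition~\ref{inversion} are met: $\widetilde{\frb}_m\neq\cO_{A_m}$ because $\frb_m\neq\cO_{X_m}$, and $X_m$ is not contained in the zero-locus of $\widetilde{\fra}_m$ (resp.\ $\widetilde{\frb}_m$) because $\fra_m$ (resp.\ $\frb_m$) is a nonzero ideal on the integral scheme $X_m$. Second, part~(i) of the proposition ensures that $(V_m,\widetilde{\fra}_m\cdot\frp_m^{e_m}\vert_{V_m})$ is log canonical on some neighborhood $V_m$ of $X_m$, and part~(ii) yields
$$c_m \,=\, \lct_{\widetilde{\fra}_m\cdot\frp_m^{e_m}\vert_{V_m}}\!\bigl(V_m,\,\widetilde{\frb}_m\vert_{V_m}\bigr).$$
Third, since $V_m$ is smooth of dimension $N_m$, Remark~\ref{rem2} provides a closed point $y_m\in V_m$ such that the same mixed log canonical threshold is realized after completing at $y_m$. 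Writing $\Spec\widehat{\cO_{V_m,y_m}}\cong W_{N_m}$, we obtain $c_m=\lct_{\fra_m'}(W_{N_m},\frb_m')$ for ideals $\fra_m',\frb_m'$ on $W_{N_m}$ meeting the required hypotheses.

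Since $N_m$ takes values in the finite set $\{1,\ldots,2n\}$, the pigeonhole principle produces an infinite subsequence with constant $N_m=N$, and the corresponding subsequence of $(c_m)$ is a strictly increasing infinite sequence of mixed log canonical thresholds on $W_N$, contradicting the assumption. The main obstacle, already handled by the propositions in this section, is keeping the ambient smooth dimension bounded as one moves along the sequence; this is precisely the combined role of Proposition~\ref{bound} and the Inversion of Adjunction identity in Proposition~\ref{inversion}(ii).
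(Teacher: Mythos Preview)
Your proof is correct and follows essentially the same route as the paper: both choose a closed point at which the minimum in~(\ref{eq0}) is achieved, invoke Proposition~\ref{bound} to bound the embedding dimension by $2n$, apply Proposition~\ref{inversion} to rewrite the mixed log canonical threshold on a smooth ambient variety, and then use Remark~\ref{rem2} to pass to the completion $W_N$. The only cosmetic difference is that the paper phrases this as a containment $\{\lct_{\fra}(X,\frb)\}\subseteq\bigcup_{i=1}^{2n}\mathcal{M}_i$, whereas you argue by contradiction with a sequence and invoke the pigeonhole principle; these are equivalent.
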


\begin{proof}
Let us denote by ${\mathcal M}_N$ the set that appears in the statement. If each ${\mathcal M}_N$
satisfies ACC, then it is clear that in order to prove Theorem~\ref{thm2} it is enough to show
$$\{\lct_{\fra}(X,\frb)\mid X\,\text{is l.c.i.},\,(X,\fra)\,\text{is log canonical},\,\dim(X)\leq n,\,
(0)\neq\frb\neq\cO_X\}
\subseteq\bigcup_{i=1}^{2n}{\mathcal M}_i.$$
Suppose that $(X,\fra)$ is log canonical, with $X$ locally a complete intersection of dimension $\leq n$, and let
$c=\lct_{\fra}(X,\frb)$.
If $x\in X$ is chosen as in Remark~\ref{rem2}, then for every open neighborhood $U$ of $x$
we have $\lct_{\fra\vert_U}(U,\frb\vert_U)=c$. Since $X$ is log canonical, it follows from Proposition~\ref{bound} that $\dim_kT_xX\leq 2n$. After replacing $X$ by a suitable neighborhood of $x$,
we may assume that we have an embedding $X\hookrightarrow A$, where $A$ is a smooth variety of
dimension $m\leq 2n$. Proposition~\ref{inversion} implies that after possibly replacing $A$ by a
neighborhood of $X$, we have $c=\lct_{\fra_1\cdot\frp^{e}}(A,\frb_1)$, where $\frp$
is the ideal defining $X$ in $A$, $e$ is the codimension of $X$ in $A$, and $\fra_1$ and $\frb_1$ are ideals in $A$
whose restrictions to $X$ give, respectively, $\fra$ and $\frb$. We conclude by replacing $A$ with
${\rm Spec}\,k\llbracket x_1,\ldots,x_m\rrbracket$, as in Remark~\ref{rem2}.
\end{proof}

\section{The Ascending Chain Condition}

The proof of Theorem~\ref{thm2} uses a construction from \cite{dFM},
whose general properties we shall review
first. Equivalently, one can replace this construction
by the one considered in \cite{Kol1}, which does not rely on  ultrafilters.

It is shown in \cite{dFM} that we can find an
algebraically closed extension $K$ of $k$, and a way to associate to every
sequence of ideals $\fra_m\subseteq k\llbracket x_1,\ldots,x_n\rrbracket$, an ideal $\widetilde{\fra}$ in $K\llbracket x_1,\ldots,x_n\rrbracket$,
that we shall call {\it the limit ideal} of the sequence $(\fra_m)_m$.
This construction satisfies the following properties:
\begin{enumerate}
\item[i)] If $\fra_m\neq (0)$ for every $m$, then $\widetilde{\fra}\neq (0)$.
\item[ii)] For every sequences of ideals $(\fra_m)_m$ and $(\frb_m)_m$ in
$k\llbracket x_1,\ldots,x_n\rrbracket$, with limit ideals
$\widetilde{\fra}$ and $\widetilde{\frb}$ in $K\llbracket x_1,\ldots,x_n\rrbracket$,
the limit ideals of the sequences $(\fra_m\. \frb_m)_m$ and $(\fra_m+\frb_m)_m$
are, respectively, $\widetilde{\fra}\cdot \widetilde{\frb}$ and $\widetilde{\fra}+
\widetilde{\frb}$.
\item[iii)] If $\fra_m=\fra$ for every $m$, then
$\widetilde{\fra}=\fra\cdot K\llbracket x_1,\ldots,x_n
\rrbracket$.
\item[iv)] If there is an integer $d$ such that each $\fra_m$ can be
generated by polynomials of degree $\leq d$, then the same holds for $\widetilde{\fra}$,
and there are infinitely many $m$ such that
$\lct(\fra_m)=\lct(\widetilde{\fra})$.
\end{enumerate}

The way one applies this construction in \cite{dFM} is the following. 
Suppose that $(\fra_m)_m$ is a sequence
of proper nonzero ideals in $k\llbracket x_1,\ldots,x_n\rrbracket$, and let
$c_m=\lct(\fra_m)$. By properties ii) and iii) above, if we denote
by $\frn$ the maximal ideal in $k\llbracket x_1,\ldots,x_n\rrbracket$, then
the limit ideal $\widetilde{\frn}$ of the constant sequence $(\frn)$
is the maximal ideal in $K\llbracket x_1,\ldots,x_n\rrbracket$, and moreover, for every $d\geq 1$
the limit ideal of the sequence $(\fra_m+\frn^d)_m$ is $\widetilde{\fra}+\widetilde{\frn}^{d}$.
Note also that from properties i) and ii) we deduce that $\widetilde{\fra}$ is a proper nonzero
ideal in $K\llbracket x_1,\ldots,x_n\rrbracket$.
Since each $\fra_m+\frn^{d}$ is generated in degree $\leq d$, it follows from iv)
that we can find infinitely many $m$ such that
$$\lct(\fra_m+\frn^d)=\lct(\widetilde{\fra}+\widetilde{\frn}^d).$$
On the other hand, well-known properties of log canonical thresholds
(see, for example, Corollary 2.11 in \cite{dFM})
give for every $m$ and $d$
$$0\leq \lct(\fra_m+\frn^d)-\lct(\fra_m)\leq \frac{n}{d},$$
$$0\leq \lct(\widetilde{\fra}+\widetilde{\frn}^d)-\lct(\widetilde{\fra})\leq
\frac{n}{d}.$$
It follows that given any $d$, there are infinitely many $m$ such that
$|\lct(\widetilde{\fra})-\lct({\fra_m})|\leq \frac{2n}{d}$. For example,
one deduces from this
that if $\lim_{m\to \infty}\lct(\fra_m)=c$, then $\lct(\widetilde{\fra})=c$.
We refer to \cite{dFM} for details.

For the proof of Theorem~\ref{thm2}, we will need the following proposition.
We point out that while the assertion in i) is an immediate consequence of the above formalism,
the one in ii) is essentially a restatement of the result in \cite{EM1} saying that the ACC Conjecture holds
for log canonical thresholds on smooth varieties.

\vfill\eject

\begin{proposition}\label{prop_ACC}
With the above notation, the following hold:
\begin{enumerate}
\item[i)] If $\lct(\fra_m)\geq \tau$ for every $m\gg 0$, then $\lct(\widetilde{\fra})\geq\tau$.
\item[ii)] There are infinitely many $m$ such that
$\lct(\fra_m)\geq \lct(\widetilde{\fra})$.
\end{enumerate}
\end{proposition}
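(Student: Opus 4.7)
The approach combines the two ``perturbation'' estimates
$$0 \leq \lct(\fra_m + \frn^d) - \lct(\fra_m) \leq \tfrac{n}{d}, \qquad 0 \leq \lct(\widetilde{\fra} + \widetilde{\frn}^d) - \lct(\widetilde{\fra}) \leq \tfrac{n}{d}$$
recorded just before the proposition with property (iv) of the limit construction: since each ideal $\fra_m+\frn^d$ is generated in degree $\leq d$, for every fixed $d$ there exist infinitely many indices $m$ with $\lct(\fra_m + \frn^d) = \lct(\widetilde{\fra} + \widetilde{\frn}^d)$. The whole point is that the truncated ideals are generated in bounded degree whereas the ideals $\fra_m$ themselves need not be.

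For (i), I would fix $d$ and, from the infinitely many indices supplied by (iv), pick one large enough so that the hypothesis $\lct(\fra_m) \geq \tau$ is also valid. Then
$$\lct(\widetilde{\fra}) \geq \lct(\widetilde{\fra} + \widetilde{\frn}^d) - \tfrac{n}{d} = \lct(\fra_m + \frn^d) - \tfrac{n}{d} \geq \lct(\fra_m) - \tfrac{n}{d} \geq \tau - \tfrac{n}{d},$$
and letting $d \to \infty$ finishes the argument.

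For (ii), set $c = \lct(\widetilde{\fra})$. Running the same chain of inequalities in the opposite direction gives, for every $d$, infinitely many $m$ with $\lct(\fra_m) \geq c - \tfrac{n}{d}$. To upgrade this ``$c - n/d$'' statement to the desired ``$\lct(\fra_m) \geq c$ for infinitely many $m$'' I would argue by contradiction. Assume that only finitely many indices satisfy $\lct(\fra_m) \geq c$, and consider the infinite set $T = \{\lct(\fra_m) : \lct(\fra_m) < c\}$, which is a subset of log canonical thresholds of ideals in $k\llbracket x_1, \ldots, x_n \rrbracket$. By the ACC theorem on smooth varieties from \cite{EM1}, the set $T$ has a maximum $t^* < c$. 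Choosing $d$ with $\tfrac{n}{d} < c - t^*$ then forces every $m$ with $\lct(\fra_m) \geq c - \tfrac{n}{d}$ to satisfy $\lct(\fra_m) \geq c$, producing infinitely many such indices and contradicting the assumption.

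The only substantive external input is the ACC theorem on smooth varieties invoked in (ii); with that in place the whole argument is a short bookkeeping exercise with the two approximation bounds, and property (iv) does the essential work of promoting one-$m$-per-$d$ existence into the ``infinitely many $m$'' conclusion of the proposition.
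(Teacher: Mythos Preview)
Your proof is correct and follows essentially the same approach as the paper's. Both arguments combine property~(iv) applied to the truncated ideals $\fra_m+\frn^d$ with the perturbation estimates to handle~(i), and both invoke the ACC theorem from \cite{EM1} to produce a gap below $\lct(\widetilde{\fra})$ in the set of values $\{\lct(\fra_m)\}$, which upgrades the approximate inequality to the exact one in~(ii); the paper phrases the gap as ``no $\lct(\fra_m)$ lies in $(\lct(\widetilde{\fra})-\epsilon,\lct(\widetilde{\fra}))$'' while you phrase it as ``$T$ has a maximum $t^*<c$'', but these are the same observation. One minor imprecision: you call $T$ infinite, but it could be a finite set of values even when infinitely many indices contribute to it---this is harmless, since a finite $T$ trivially has a maximum.
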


\begin{proof}
The first assertion follows immediately from the above discussion. Indeed,
since $\lct(\widetilde{\fra})=\lim_{d\to\infty}\lct(\widetilde{\fra}+\widetilde{\frn}^d)$,
 it is enough to
show that $\lct(\widetilde{\fra}+\widetilde{\frn}^d)\geq \tau$ for every $d$. This follows since
given $d$, we can find $m\gg 0$ such that
$$\lct(\widetilde{\fra}+\widetilde{\frn}^d)=\lct(\fra_m+\frn^d)\geq\lct(\fra_m)\geq\tau.$$

For ii), we use the fact that the sequence $(\lct(\fra_m))_m$ contains no strictly increasing infinite
subsequences.  This is a consequence of the fact that by Theorem~1.1 in \cite{EM1}, the set
$${\mathcal T}_n:=\{\lct(\fra)\mid \fra\subseteq k\llbracket x_1,\ldots,x_n\rrbracket, \fra\neq (0), \ord(\fra)\geq 1\}$$
satisfies ACC. In fact, the result in \emph{loc. cit.} concerns log canonical thresholds
of principal ideals in $k[x_1,\ldots,x_n]$, but this implies our assertion by a well-known argument
(see, for example, Proposition 3.6 and Corolary 3.8 in \cite{dFM}).

In particular, we see that there is $\epsilon>0$ such that no $\lct(\fra_m)$ lies
in the open interval $(\lct(\widetilde{\fra})-\epsilon,\lct(\widetilde{\fra}))$. Let $d$ be such that $\frac{2n}{d}<
\epsilon$. We have seen that there are infinitely many $m$ such that
$|\lct(\widetilde{\fra})-\lct(\fra_m)|<\frac{2n}{d}$, and by the choice of $d$, it follows that
for all such $m$ we have $\lct(\fra_m)\geq\lct(\widetilde{\fra})$.
\end{proof}

We can now give the proof of our main result.

\begin{proof}[Proof of Theorem~\ref{thm2}]
In light of Corollary~\ref{reduction}, it is enough to prove that for every $n\geq 1$, if we put
$X={\rm Spec}\,k\llbracket x_1,\ldots,x_n\rrbracket$, then the set
$${\mathcal M}_n:=\{\lct_{\fra}(X,\frb)\mid (X,\fra)\,\text{is log canonical},\,(0)\neq
\frb\neq\cO_X\}$$
satisfies ACC.
Suppose that we have a strictly increasing sequence $(c_m)_m$, with
$c_m=\lct_{\fra_m}(X,\frb_m)$. Let $c=\lim_{m\to\infty}c_m$ (since we have
$\lct_{\fra}(X,\frb)\leq\lct(X,\frb)\leq n$, this sequence is convergent).

We now apply the construction mentioned in the beginning of the section
to get the algebraically closed extension $K$ of $k$
and, for each of the sequences $(\fra_m)_m$ and $(\frb_m)_m$, its limit ideal $\widetilde{\fra}$,
respectively $\widetilde{\frb}$, in $K\llbracket x_1,\ldots,x_n\rrbracket$. We want to compare $c$
with $\widetilde{c}:=\lct_{\widetilde{\fra}}(\widetilde{X},\widetilde{\frb})$, where $\widetilde{X}
={\rm Spec}\,K\llbracket x_1,\ldots,x_n\rrbracket$.
Note that by assumption this mixed log canonical threshold is well-defined, since
we have $\lct(\fra_m)\geq 1$ for every $m$, hence
$\lct(\widetilde{\fra})\geq 1$ by assertion i) in Proposition~\ref{prop_ACC}.

Consider first any positive integers $p$ and $q$ such that $p/q<c$. By assumption, we have
$c_m>p/q$ for all $m\gg 0$. Therefore the pair $(X,\fra_m\.\frb_m^{p/q})$ is log canonical,
 hence
$\lct(\fra_m^q\.\frb_m^p)\geq 1/q$ for all $m\gg 0$. As we have mentioned, the limit ideal in
$K\llbracket x_1,\ldots,x_n\rrbracket$ of the sequence $(\fra_m^q\.\frb_m^p)_m$
is $\widetilde{\fra}^q\.\widetilde{\frb}^p$. Assertion i) in Proposition~\ref{prop_ACC} implies that
$\lct(\widetilde{\fra}^q\.\widetilde{\frb}^p)\geq 1/q$, hence $\widetilde{c}=\lct_{\widetilde{\fra}}
(\widetilde{X},\widetilde{\frb})\geq p/q$. As this holds for every $p/q<c$, we conclude that
$\widetilde{c}\geq c$.

On the other hand,
since $\widetilde{c}\in\QQ$, we may write $\widetilde{c}=\frac{r}{s}$,
for positive integers $r$ and $s$. It follows from Remark~\ref{rem1}
that $\lct(\widetilde{\fra}\.\widetilde{\frb}^{r/s})=1$,
hence $\lct(\widetilde{\fra}^s\.\widetilde{\frb}^r)=1/s$.
Since $\widetilde{\fra}^s\.\widetilde{\frb}^r$ is the limit ideal of the
sequence $(\fra_m^s\.\frb_m^r)_m$, assertion ii) in
Proposition~\ref{prop_ACC} implies that there are infinitely many $m$ such that
$\lct(\fra_m^s\.\frb_m^r)\geq 1/s$, hence $\lct_{\fra_m}(X,\frb_m)\geq r/s$.
For such $m$ we have
$$\widetilde{c}\geq c>c_m\geq \frac{r}{s}=\widetilde{c},$$
a contradiction. This completes the proof of the theorem.
\end{proof}

\providecommand{\bysame}{\leavevmode \hbox \o3em
{\hrulefill}\thinspace}

\end{document}